\newcommand{\be}{\begin{equation}}
\newcommand{\ee}{\end{equation}}
\newcommand{\bbE}{\mathbb E}
\newcommand{\bbF}{\mathbb F}
\newcommand{\bbH}{\mathbb H}
\newcommand{\bbN}{\mathbb N}
\newcommand{\bbP}{\mathbb P}
\newcommand{\bbR}{\mathbb R}
\newcommand{\bbS}{\mathbb S}
\newcommand{\scB}{\mathcal B}
\newcommand{\scF}{\mathcal F}
\newcommand{\scT}{\mathcal T}
\newcommand{\veps}{\varepsilon}
\newcommand{\norm}[1]{\ensuremath{\left\| #1 \right\|}}
\newcommand{\abs}[1]{\ensuremath{\left| #1 \right|}}
\DeclareMathOperator*{\esssup}{ess\,sup}
\newcommand{\Ito}{It\ensuremath{\hat{\textrm{o}}}}
\newcommand{\crl}[1]{\ensuremath{ \left\{ #1 \right\} }}
\newcommand{\edg}[1]{\ensuremath{ \left[ #1 \right] }}
\newcommand{\brak}[1]{\ensuremath{\left( #1 \right)}}
\newtheorem{theorem}{Theorem}[section]
\newtheorem{definition}[theorem]{Definition}
\newtheorem{proposition}[theorem]{Proposition}
\newtheorem{lemma}[theorem]{Lemma}
\newtheorem{remark}[theorem]{Remark}
\newtheorem{example}[theorem]{Example}
\newtheorem{examples}[theorem]{Examples}
\newtheorem{foo}[theorem]{Remarks}
\newenvironment{Remark}{\begin{remark}\rm}{\end{remark}}
\title{Multidimensional quadratic and subquadratic BSDEs with special structure}
\author{
Patrick Cheridito\\
Princeton University\\
Princeton, NJ 08544, USA
\and
Kihun Nam\\
Princeton University\\
Princeton, NJ 08544, USA
}
\date{January 2015}
\begin{document}
\maketitle

\begin{abstract}
We study multidimensional BSDEs of the form
$$
Y_t = \xi + \int_t^T f(s,Y_s,Z_s)ds - \int_t^T Z_s dW_s
$$ 
with bounded terminal conditions $\xi$ and drivers $f$ that grow at most quadratically in $Z_s$.
We consider three different cases. In the first one the BSDE is Markovian, and 
a solution can be obtained from a solution to a related FBSDE. In the second case, 
the BSDE becomes a one-dimensional quadratic BSDE when projected 
to a one-dimensional subspace, and a solution can be derived from 
a solution of the one-dimensional equation. 
In the third case, the growth of the driver $f$ in $Z_s$ is strictly subquadratic, and 
the existence and uniqueness of a solution can be shown by first solving the BSDE on 
a short time interval and then extending it recursively.\\[2mm]
{\bf Key words:} Multidimensional backward stochastic differential equations, 
forward-backward stochastic differential equations, quadratic BSDEs, projectable BSDEs,
strictly subquadratic BSDEs.
\end{abstract}

\setcounter{equation}{0}
\section{Introduction}

We study multidimensional BSDEs of the form 
\begin{align}\label{bsde}
Y_t=\xi + \int_t^T f(s,Y_s,Z_s) ds - \int_t^T Z_s dW_s, 
\end{align}
where $Y_t$ takes values in $\mathbb{R}^d$, $Z_t$ is $\mathbb{R}^{d \times n}$-valued
and $W$ is an $n$-dimensional Brownian motion.
If the terminal condition $\xi$ is square-integrable and the driver 
$f(t,y,z)$ Lipschitz continuous in $(y,z)$, the existence of a unique solution 
can be shown with a Picard--Lindel\"of iteration argument, see
e.g. Pardoux and Peng (1990) or El Karoui et al. (1997).
Kobylanski (2000) proved that one-dimensional ($d=1$) BSDEs with drivers of 
quadratic growth in $z$ have solutions if $\xi$ is bounded.  
Moreover, if $\xi$ has bounded Malliavin derivative, the growth of
$f(s,y,z)$ in $z$ can be arbitrary; see Cheridito and Nam (2014).
For multidimensional BSDEs the situation is more complicated because one cannot use 
comparison results; see e.g. Hu and Peng (2006). In fact, multidimensional 
BSDEs with drivers of quadratic growth in $z$ do not always admit solutions even 
if the terminal condition $\xi$ is bounded; see Frei and dos Reis (2011) for an example. 
An early result for superlinear multidimensional BSDEs was given by Bahlali et
al. (2001), which assumed that the growth of $f(s,y,z)$ in $z$ is of the order 
$|z|\sqrt{\log |z|}$. It was generalized by Bahlali et al. (2010) to the case
where $f(s,y,z)$ has strictly subquadratic growth in $z$ and satisfies a
monotonicity condition. Tevzadze (2008) gave an existence and uniqueness result 
for multidimensional BSDEs with general drivers of quadratic $z$-growth in the case 
where the terminal condition has small $L^{\infty}$-norm.

In this paper we put restrictions on the driver. Three different cases are considered.
In all three we assume $\xi$ to be bounded and use BMO-martingale theory 
together with Girsanov's theorem to construct an equivalent probability measure 
that can be used to prove the existence of a solution.

In Section \ref{sec:FBSDE} we assume the BSDE to be Markovian and related to an FBSDE of the form 
\be \label{fbsde}
\begin{aligned}
dP_t & = G(t,P_t,Q_t,R_t)dt+dW_t, \quad P_0=0\\
dQ_t & =-F(t,P_t,Q_t,R_t)dt+ R_t dW_t, \quad Q_T= h(P_T)
\end{aligned}
\ee
for a bounded function $h$. If the FBSDE has a solution, we change the 
probability measure to obtain a solution to a different FBSDE, from which a solution to the BSDE \eqref{bsde} can 
be derived. A similar approach was taken by Liang et al. (2010) but without proving that
the solution is adapted to the filtration generated by the Brownian motion driving the BSDE;
that is, they showed the existence of a weak solution. Here, we prove the existence of a strong solution.
There exist different results in the literature guaranteeing the existence of a solution to 
\eqref{fbsde}. We use one of Delarue (2002) to derive that the BSDE \eqref{bsde} has a unique 
bounded solution. Mania and Schweizer (2005) and Ankirchner et al. (2009) studied the transformation of 
one-dimensional quadratic BSDEs under a change of measure, but not with the aim
of proving the existence of a classical solution. In Section \ref{sec:proj} conditions 
are given under which equation \eqref{bsde} can be turned into a one-dimensional quadratic BSDE
by projecting it on a one-dimensional subspace of $\mathbb{R}^d$.
Results of Kobylanski (2000) guarantee that the one-dimensional 
equation has a solution. From there a solution to the multidimensional equation can
be obtained by changing the probability measure and solving a linear equation.
In Section \ref{sec:sub} the growth of $f(s,y,z)$ in $z$ is assumed to be strictly subquadratic.
This makes it possible to prove the existence of a unique solution on a short time interval with a 
contraction argument. Under an additional structural assumption, the solution can 
be estimated by taking conditional expectation with respect to an
equivalent probability measure. Then the short-time solution can be extended 
to a global solution.\\[2mm]
{\bf Notation:}\\
In the whole paper $T \in \mathbb{R}_+$ is a finite time horizon and 
$(W_t)_{0 \le t\le T}$ an $n$-dimensional Brownian motion on a 
probability space $(\Omega,\scF,\bbP)$. By $\bbF$ we denote the augmented 
filtration $({\cal F}_t)$ generated by $W$.  The terminal condition $\xi$ is a
bounded $d$-dimensional $\scF_T$-measurable random vector 
and the driver $f : [0,T] \times \Omega \times \mathbb{R}^d \times \mathbb{R}^{d \times n} \to \mathbb{R}^d$
a ${\cal P} \otimes {\cal B}(\mathbb{R}^d) \otimes {\cal B}(\mathbb{R}^{d \times n})$-measurable 
mapping, where ${\cal P}$ denotes the predictable sigma-algebra and 
${\cal B}(\mathbb{R}^d)$ and ${\cal B}(\mathbb{R}^{d \times n})$ the Borel sigma-algebras
on $\bbR^d$ and $\bbR^{d \times n}$, respectively. 
As usual, we understand equalities and inequalities between random variables in the
$\bbP$-almost sure sense. $Y, \xi$ and $f$ are understood as $d \times 1$-matrices, $W$ as an $n \times 1$-matrix
and $Z$ as a $d \times n$-matrix. 
By $Z^T$ we denote the transpose of $Z$ and by $|\cdot|$ the Euclidean norm. 
That is, for a vector $Y$, $|Y| := \sqrt{\sum_i Y^2_i}$, and for a 
matrix $Z$, $|Z|=\sqrt{{\rm tr}(ZZ^T)}$. $\bbR_+$ denotes the set of nonnegative real numbers
and $\|.\|_p$ the $L^p$-norm. We need the following Banach spaces of stochastic processes:
\begin{itemize}
\item $\bbS^p(\bbR^d)$: all $d$-dimensional continuous adapted processes satisfying
$$
\|Y\|_{\bbS^p}:= \norm{\sup_{0 \le t \le T}|Y_t|}_p <\infty
$$
\item $\bbH^p(\bbR^{d \times n})$ for $p < \infty$: all $\bbR^{d \times n}$-valued predictable processes satisfying 
$$
\norm{Z}_{\bbH^p}:= \norm{\brak{\int_0^T |Z_s|^2 ds}^{1/2}}_p <\infty
$$
\item $\bbH^{\infty}(\bbR^{d \times n})$: all $\bbR^{d \times n}$-valued predictable processes satisfying 
$$
\norm{Z}_{\bbH^{\infty}}:= \esssup_{(t, \omega)} |Z_t(\omega)| <\infty
$$
\item $\bbH^{\rm BMO}(\bbR^{d \times n})$: all $Z \in \bbH^2(\bbR^{d \times n})$ satisfying
$$
\|Z\|_{\rm BMO} := \sup_{\tau\in\scT}
\left\|\bbE_\tau \int_{\tau}^T |Z_s|^2 ds \right\|^{1/2}_{\infty} < \infty,
$$
where $\scT$ denotes the set of all $[0,T]$-valued stopping times $\tau$
and $\bbE_{\tau}$ the conditional expectation with respect to ${\cal F}_{\tau}$.
\end{itemize}

By $\bbS^p_{[a,b]}(\bbR^d)$, $\bbH^p_{[a,b]}(\bbR^{d\times n})$ and
$\bbH^{\rm BMO}_{[a,b]}(\bbR^{d \times n})$ we denote the same spaces if the processes 
have time indexes in $[a,b]$.

For $H \in \bbH^{\rm BMO}(\bbR^{n \times 1})$, $\int_0^t H^T_s dW_s$ is a BMO-martingale and
$$
{\cal E}^H_t := \exp \brak{\int_0^t H^T_s dW_s - \frac{1}{2} \int_0^T |H_s|^2 ds}
$$
a martingale; see Kazamaki (1994). So one obtains from Girsanov's theorem that 
${\cal E}^H_T \cdot \bbP$ defines an equivalent probability measure, under which 
$W_t - \int_0^t H_s ds$ is a Brownian motion. Moreover, every $Z \in \bbH^{\rm BMO}(\bbR^{d \times n})$ 
with respect to $\bbP$ is also in $\bbH^{\rm BMO}(\bbR^{d \times n})$ with respect to 
${\cal E}^H_T \cdot \bbP$.

\setcounter{equation}{0}
\section{Markovian quadratic BSDEs}
\label{sec:FBSDE}

In this section we consider BSDEs of the form 
\begin{align} \label{markovian}
Y_t = h(W_T) + \int_t^T \crl{F(s,W_s,Y_s,Z_s)+Z_s G(s,W_s,Y_s,Z_s)}ds-\int_t^TZ_sdW_s 
\end{align}
for measurable functions $h:\bbR^n\to\bbR^d$, 
$F:[0,T]\times\bbR^n\times\bbR^d\times\bbR^{d\times n}\to\bbR^d$ and 
$G:[0,T]\times\bbR^n\times\bbR^d\times\bbR^{d\times n}\to\bbR^n$.

The following theorem gives conditions under which \eqref{markovian} has a solution if 
there is a solution to a related FBSDE.
    
\begin{theorem} \label{mainthm0}
Assume that there exists a constant $C\in\bbR_+$ and a nondecreasing function
$\rho:\bbR_+\to\bbR_+$ such that the following conditions hold:
\begin{itemize}
\item[{\rm (A1)}] $|h(x)| \le C$
\item [{\rm (A2)}] $y^TF(t,x,y,z)\leq
    C|y|\brak{1+|y|+|z|}$ for all $(t,x,y,z)\in[0,T]\times\bbR^n\times\bbR^d\times\bbR^{d\times n}$
\item[{\rm (A3)}] $\abs{G(t,x,y,z)}\leq
  \rho(|y|)\brak{1+|z|}$ for all
  $(t,y,z)\in[0,T]\times\bbR^d\times\bbR^{d\times n}$
\item[{\rm (A4)}] The FBSDE
\begin{align*}
dP_t&= G(t,P_t,Q_t,R_t)dt+dW_t, \quad P_0=0\\
dQ_t&=-F(t,P_t,Q_t,R_t)dt+ R_tdW_t, \quad Q_T=h(P_T)
\end{align*}
has a solution $(P,Q,R)\in\bbH^2(\bbR^n)\times\bbS^2(\bbR^d)\times\bbH^2(\bbR^{d\times n})$ 
such that $Q_t=q(t,P)$ and $R_t=r(t,P)$ for predictable functions
$q : [0,T] \times C([0,T],\bbR^n) \to\bbR^d$ and $r :[0,T] \times C([0,T],\bbR^n)\to\bbR^{d\times n}$.
\end{itemize}
Then $(Y_t,Z_t) =(q(t,W), r(t,W))$ is a solution of the BSDE
\eqref{markovian} in $\bbS^{\infty}(\mathbb{R}^d)
\times \bbH^{\rm BMO}(\bbR^{d \times n})$, and $Z$ is bounded if $R$ is bounded.
\end{theorem}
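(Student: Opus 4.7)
The plan is to first establish a priori bounds $Q\in\bbS^\infty$, $R\in\bbH^{\rm BMO}$ for the given FBSDE solution, then use Girsanov to pass to an equivalent measure under which $P$ becomes a Brownian motion and the $Q$-equation becomes a BSDE with driver $F+ZG$ driven by $P$, and finally transfer this identity back to the original $W$ via uniqueness of Wiener measure.

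For the a priori bounds, I would apply \Ito's formula to $e^{\beta t}(1+|Q_t|^2)$ for $\beta>0$ to be chosen. The quadratic variation contributes $e^{\beta t}|R_t|^2 dt$, and (A2) combined with Young's inequality yields $-2Q_t^T F(t,P_t,Q_t,R_t)\ge -c_1 - c_2|Q_t|^2 - \tfrac12|R_t|^2$ for constants $c_1,c_2$ depending only on $C$. Choosing $\beta>c_2$, localizing the resulting local-martingale piece, and taking conditional expectations using $|Q_T|=|h(P_T)|\le C$ from (A1) gives $\sup_t|Q_t|\le K_1$ and $\sup_{\tau\in\scT}\norm{\bbE_\tau\int_\tau^T|R_s|^2 ds}_\infty\le K_2$, i.e.\ $Q\in\bbS^\infty$ and $R\in\bbH^{\rm BMO}$.

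Next, condition (A3) together with the bound on $Q$ gives $|G(s,P_s,Q_s,R_s)|\le\rho(K_1)(1+|R_s|)$, so $H_s:=-G(s,P_s,Q_s,R_s)$ lies in $\bbH^{\rm BMO}(\bbR^{n\times 1})$. By the remarks at the end of the notation section, $\tilde\bbP:={\cal E}^H_T\cdot\bbP$ is then an equivalent probability measure under which $\tilde W_t:=W_t-\int_0^t H_s ds=P_t$ is a Brownian motion and $R$ remains in $\bbH^{\rm BMO}$. Substituting $dW_s=dP_s-G(s,P_s,Q_s,R_s)ds$ into the $Q$-equation produces, $\tilde\bbP$-a.s.,
\begin{align*}
Q_t = h(P_T) + \int_t^T\brak{F(s,P_s,Q_s,R_s)+R_s\,G(s,P_s,Q_s,R_s)}ds - \int_t^T R_s dP_s,
\end{align*}
which, since $Q_t=q(t,P)$ and $R_t=r(t,P)$, is a pathwise identity in the Brownian motion $P$.

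The final step transfers this identity from $P$ under $\tilde\bbP$ to $W$ under $\bbP$. The laws of $P$ under $\tilde\bbP$ and of $W$ under $\bbP$ coincide, being Wiener measure on $C([0,T],\bbR^n)$; $h$, $F$, $G$ are Borel and $q$, $r$ are predictable path functionals; and approximating $\int R_s dP_s$ by Riemann sums gives a path-measurable representative of the stochastic integral. Hence the identity passes to $(Y_t,Z_t):=(q(t,W),r(t,W))$ and becomes precisely \eqref{markovian}, while the bounds on $Q$ and $R$ push forward to $Y\in\bbS^\infty$, $Z\in\bbH^{\rm BMO}$, and $Z\in\bbH^\infty$ whenever $R\in\bbH^\infty$. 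The genuinely delicate point, which I expect to be the main obstacle, is exactly this last transfer: one must verify that the stochastic integral constructed under $\tilde\bbP$ and under $\bbP$ coincide as Borel functionals of the sample path, a standard but nontrivial verification on canonical Wiener space.
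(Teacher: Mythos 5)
Your proposal is correct and follows essentially the same route as the paper: derive $Q\in\bbS^\infty$, $R\in\bbH^{\rm BMO}$ by applying It\^o's formula to an exponentially weighted $|Q|^2$ with (A1)--(A2), use (A3) and Girsanov with density ${\cal E}^{-G}_T$ to make $P$ a Brownian motion, rewrite the backward equation as driven by $P$, and transfer the resulting identity in $(q,r)$ from $P$ under the new measure to $W$ under $\bbP$ via equality of laws. The only differences are cosmetic: you localize where the paper verifies the martingale property directly via BDG, and you spell out the Wiener-space transfer of the stochastic integral, which the paper leaves implicit.
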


\begin{proof}
One obtains from It\^{o}'s formula that for every 
$a \in \bbR_+$ and $[0,T]$-valued stopping time $\tau$,
$$
e^{a\tau}|Q_\tau|^2= e^{aT}|h(P_T)|^2
+\int_\tau^Te^{as}\brak{2Q^T_sF(s,P_s,Q_s,R_s)-|R_s|^2-a|Q_s|^2}ds 
- \int_\tau^T 2e^{as}Q^T_s R_s dW_s.
$$
Since $Q \in \bbS^2(\bbR^d)$ and $R \in \bbH^2(\bbR^{d\times n})$, one has
$$
\bbE \sqrt{\int_0^T |Q^T_s R_s|^2 ds} \le
\bbE \sup_{0 \le s \le T} |Q_s| \sqrt{\int_0^T |R_s|^2 ds}
\le \norm{Q}_{\bbS^2} \norm{R}_{\bbH^2} < \infty.
$$
So it follows from the Burkholder--Davis--Gundy inequality that
$\sup_{0 \le t \le T} |\int_0^t 2e^{as} Q^T_s R_s dW_s|$ is integrable, implying that
the local martingale $\int_0^t 2e^{as} Q^T_s R_s dW_s$ is a true martingale. Therefore,
$$
e^{a\tau}|Q_\tau|^2=\bbE_\tau\brak{e^{aT}|h(P_T)|^2
+\int_\tau^Te^{as}\brak{2Q^T_sF(s,P_s,Q_s,R_s)-|R_s|^2-a|Q_s|^2}ds}.
$$
By assumption (A2),
\begin{align*}
& 2 Q^T_sF(s,P_s,Q_s,R_s)-|R_s|^2-a|Q_s|^2 \leq2C|Q_s|(1+|Q_s|+|R_s|)-|R_s|^2-a|Q_s|^2\\
&\leq C^2+(2C^2+2C+1-a)|Q_s|^2-\frac{1}{2}|R_s|^2.
\end{align*}
So for $a=2C^2+2C+1$, one obtains
\begin{align*}
& |Q_\tau|^2+\frac{1}{2} \bbE_\tau\int_\tau^T |R_s|^2 ds \le
e^{a\tau}|Q_\tau|^2+\frac{1}{2}\bbE_\tau\int_\tau^Te^{as}|R_s|^2ds\\ & \leq
\bbE_\tau\brak{e^{aT}|h(P_T)|^2+C^2\int_\tau^Te^{as}ds}\leq C^2e^{aT}(1+T).
\end{align*}
In particular, $Q$ is in $\bbS^{\infty}(\mathbb{R}^d)$ and $R$ in $\bbH^{\rm BMO}(\mathbb{R}^{d \times n})$.
By assumption (A3), one has
\begin{align*}
|G(s,P_s,Q_s,R_s)| \le \rho(C^2e^{aT}(1+T)) (1+|R_s|), 
\end{align*}
from which it follows that $G(s,P_s,Q_s,R_s)$ belongs to $\bbH^{\rm BMO}(\bbR^{n\times 1})$.
Therefore, $P$ is a Brownian motion under the measure
${\cal E}^{-G}_T \cdot \mathbb{P}$, and $R$ is still in $\bbH^{\rm BMO}(\mathbb{R}^{d \times n})$ under 
${\cal E}^{-G}_T \cdot \mathbb{P}$. The backward equation in (A4) can be written as
\begin{align*}
dQ_t = -\brak{F(t,P_t,Q_t,R_t)+ R_t G(t,P_t,Q_t,R_t)}dt+R_tdP_t,\quad Q_T=h(P_T).
\end{align*}
But since $Q_t=q(t,P)$ and $R_t=r(t,P)$, one has
\[
d q(t,P)=-\brak{F(t,P_t,q(t,P),r(t,P))+ r(t,P) G(t,P_t,q(t,P),r(t,P))}dt+r(t,P) dP_t.
\]
So $(Y,Z)=\brak{q(\cdot,W),r(\cdot,W)}$ is in $\bbS^{\infty}(\mathbb{R}^d)
\times \bbH^{\rm BMO}(\mathbb{R}^{d \times n})$ and satisfies
\[
dY_t= - \brak{F(t,W_t,Y_t,Z_t)+Z_t G(t,W_t,Y_t,Z_t)}dt+Z_tdW_t, \quad Y_T=h(W_T).
\]
Finally, if $R$ is bounded, then so is $Z$.
\end{proof}


\begin{Remark}
Since the BSDE \eqref{markovian} is Markovian, it is related to 
the semilinear parabolic PDE with terminal condition
\[
u_t+\frac{1}{2}\triangle u+F(t,x,u,\nabla u)+(\nabla u)g(t,x,u,\nabla u)=0,\quad u(T,x)=h(x).
\]
For example, if the PDE has a $C^{1,2}$-solution $u:[0,T]\times\bbR^n\to\bbR^d$,
it follows from It\^{o}'s formula that 
$(Y_t,Z_t)=(u(t,W_t),\nabla u(t,W_t))$ solves the BSDE \eqref{markovian}. But 
the standard construction of a viscosity solution to the PDE from a BSDE solution 
does not work because the necessary comparison results do not extend from the 
one- to the multidimensional case; see Peng (1999).
\end{Remark}

A crucial assumption of Theorem \ref{mainthm0} is {\rm (A4)}. There exist different results 
in the FBSDE literature from which it follows. The following proposition derives the 
existence of a unique solution to the quadratic BSDE \eqref{markovian} 
from an FBSDE result of Delarue (2002).

\begin{proposition} \label{prop:del}
Assume there exists a constant $C \in\bbR_+$ such that for all $t,x,x',y,y',z,z'$ the following hold:
\begin{align*}
& \bullet \; \abs{F(t,x,y,z)-F(t,x',y',z')} \leq C(|x-x'|+|y-y'|+|z-z'|)\\
& \bullet \; |G(t,x,y,z)-G(t,x',y',z')| \leq C(|x-x'|+|y-y'|+|z-z'|)\\
& \bullet \; |h(x)-h(x')| \leq C|x-x'|\\
& \bullet \; |F(t,x,0,0)| + |G(t,x,0,0) + |h(x)| \leq C.
\end{align*}
Then the BSDE \eqref{markovian} has a unique solution $(Y,Z)$ in 
$\bbS^{\infty}(\bbR^d) \times \bbH^{\infty}(\bbR^{d\times n})$, and it is of the form 
$Y_t = y(t,W_t)$, $Z_t = \nabla_x y (t,W_t)$, where 
$y : [0,T] \times \mathbb{R}^n \to \mathbb{R}^d$ is a continuous 
function that is uniformly Lipschitz in $x \in \mathbb{R}^n$ and $\nabla_x$ 
denotes the weak derivative with respect to $x$ in the Sobolev sense.
\end{proposition}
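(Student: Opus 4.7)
The plan is to verify the hypotheses (A1)--(A4) of Theorem \ref{mainthm0} from the given Lipschitz and boundedness conditions, apply Theorem \ref{mainthm0} to obtain existence and the claimed Markovian representation, and handle uniqueness by a Girsanov transformation that reduces the difference of two solutions to a linear BSDE with bounded coefficients. Conditions (A1)--(A3) are essentially immediate: the bound $|h(x)|\le C$ is stated; combining the Lipschitz bound on $F$ with $|F(t,x,0,0)|\le C$ gives $|F(t,x,y,z)|\le C(1+|y|+|z|)$, which after multiplication by $|y|$ yields (A2); and the analogous estimate for $G$ produces (A3) with, for example, $\rho(r):=C(1+r)$.

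The crux is (A4), which I would extract from Delarue (2002). His results on fully coupled FBSDEs apply here because the forward diffusion is the identity (hence uniformly elliptic and Lipschitz), and $F,G,h$ are globally Lipschitz with bounded values at zero. What Delarue yields is not merely existence and uniqueness of $(P,Q,R)$, but the stronger decoupling $Q_t=u(t,P_t)$ for a continuous function $u:[0,T]\times\bbR^n\to\bbR^d$ that is uniformly Lipschitz in the spatial variable, together with $R_t=\nabla_x u(t,P_t)$ in the weak Sobolev sense. Setting $q(t,\omega):=u(t,\omega_t)$ and $r(t,\omega):=\nabla_x u(t,\omega_t)$ then verifies (A4), and Theorem \ref{mainthm0} delivers the solution $(Y_t,Z_t)=(u(t,W_t),\nabla_x u(t,W_t))$ in $\bbS^{\infty}(\bbR^d)\times\bbH^{\rm BMO}(\bbR^{d\times n})$; since the Lipschitz constant of $u$ bounds $R$ pointwise, Theorem \ref{mainthm0} moreover gives $Z\in\bbH^{\infty}(\bbR^{d\times n})$.

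For uniqueness, suppose $(Y^i,Z^i)$, $i=1,2$, both lie in $\bbS^{\infty}\times\bbH^{\infty}$ and solve \eqref{markovian}. Then $G^2_s:=G(s,W_s,Y^2_s,Z^2_s)$ is bounded, so the Girsanov change of measure with density $\scE^{G^2}_T$ defines an equivalent probability $\bbQ$ under which $\tilde W_t:=W_t-\int_0^tG^2_s\,ds$ is a Brownian motion. Subtracting \eqref{markovian} for $i=1$ and $i=2$ and rewriting the cross term $Z^1G^1-Z^2G^2=Z^1\Delta G+\Delta Z\cdot G^2$ shows that $(\Delta Y,\Delta Z):=(Y^1-Y^2,Z^1-Z^2)$ satisfies a linear BSDE driven by $\tilde W$ with zero terminal value and coefficients that are Lipschitz in $(\Delta Y,\Delta Z)$ with bounded coefficient processes, since $Z^1\in\bbH^{\infty}$ makes $Z^1\Delta G$ suitably bounded. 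Standard uniqueness for Lipschitz BSDEs then forces $\Delta Y\equiv 0$ and $\Delta Z\equiv 0$. The main obstacle throughout is extracting from Delarue (2002) the precise spatial regularity that the decoupling field $u$ is Lipschitz with weak derivative equal to $R$, rather than only the existence of an FBSDE solution; once that is available, the remaining steps are routine applications of Theorem \ref{mainthm0} and Girsanov's theorem.
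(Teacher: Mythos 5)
Your verification of (A1)--(A3) and your uniqueness argument are fine (your Girsanov/linear-BSDE route to uniqueness differs from the paper, which instead truncates the driver via a projection $\pi_L$ onto the ball of radius $L$ and invokes Pardoux--Peng uniqueness for Lipschitz drivers, but both work since $Y,Z,\tilde Y,\tilde Z$ are bounded). The problem is the existence part: you assert that Delarue (2002) directly yields not only $Q_t=u(t,P_t)$ with $u$ continuous, bounded and uniformly Lipschitz in $x$, but also the representation $R_t=\nabla_x u(t,P_t)$ with $\nabla_x u$ a weak Sobolev derivative. That is exactly the step that is not available off the shelf, and you yourself flag it as ``the main obstacle'' without closing it. Delarue's Proposition 2.4 and Theorem 2.6 give existence, uniqueness, $R\in\bbH^{\infty}$, and the decoupling $Q_t=q(t,P_t)$, but they do not give that $R$ is a predictable functional of the path of $P$, which is what hypothesis (A4) of Theorem \ref{mainthm0} actually requires, nor the identification of $R$ with the gradient of $q$.

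The paper fills precisely this gap with a separate argument: since $G_t=G(t,P_t,Q_t,R_t)$ is bounded, $\scE^{-G}_T\cdot\bbP$ defines an equivalent measure $\tilde\bbP$ under which $P$ is an $\bbF$-Brownian motion; $Q$ is a continuous $\bbF$-semimartingale, hence by Stricker's theorem also a semimartingale in the smaller filtration $\bbF^P$; the martingale representation theorem in $\bbF^P$ under $\tilde\bbP$ writes its martingale part as $\int_0^\cdot H_s\,dP_s$ with $H$ $\bbF^P$-predictable, and uniqueness of the canonical semimartingale decomposition (valid in both filtrations because $P$ is an $\bbF$-Brownian motion under $\tilde\bbP$) forces $R=H$, so $R_t=r(t,P)$ and (A4) holds. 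Only after applying Theorem \ref{mainthm0} does the paper identify $r(t,P)=\nabla_x q(t,P_t)$, and this uses Theorem 1 of Chitashvili and Mania (1996) for continuous functions $q$ such that $q(t,P_t)$ is an It\^o process, not anything stated in Delarue. Without either quoting a version of Delarue's result that genuinely contains the gradient representation (none is cited in the paper) or supplying an argument of this kind, your proof of existence and of the claimed form $Z_t=\nabla_x y(t,W_t)$ is incomplete at its central point.
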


\begin{proof}
It follows from the assumptions that the conditions of Proposition 2.4
and Theorem 2.6 of Delarue (2002) hold.
Therefore, the FBSDE in {\rm (A4)} of Theorem \ref{mainthm0} has a unique solution $(P,Q,R)$
in $\bbH^2(\bbR^n) \times \bbS^{\infty}(\bbR^d) \times \bbH^{\infty}(\bbR^{d \times n})$ such that 
$Q$ is of the form $Q_t = q(t,P_t)$ for a bounded continuous function 
$q : [0,T] \times \mathbb{R}^n \to \mathbb{R}^d$ which is uniformly Lipschitz in $x \in \mathbb{R}^n$.
As a consequence, the process $G_t = G(t,P_t,Q_t,R_t)$ is bounded, 
from which it follows that ${\cal E}^{-G}_T \cdot \mathbb{P}$
defines a probability measure equivalent to $\bbP$, under which $P$ is an $\bbF$-adapted 
$n$-dimensional Brownian motion. It can be seen from the representation
$$
Q_t = q(t,P_t) = Q_0 - \int_0^t \crl{F(s,P_s, Q_s,R_s)+ R_s G(s,P_s,Q_s,R_s)} ds +\int_0^t R_s dP_s
$$
that $Q$ is a continuous $\bbF$-semimartingale. By Stricker's theorem, it is also a 
continuous semimartingale with respect to the filtration $\bbF^P$
generated by $P$. In particular, it has a unique canonical $(\bbF^P, \tilde{\bbP})$-semimartingale
decomposition $Q_t= Q_0+ M_t +A_t$, where $M$ is a
continuous $(\bbF^P,\tilde\bbP)$-local martingale and $A$ a finite variation
process with $M_0=A_0=0$. By the martingale representation theorem, $M_t$ can be written as
$M_t = \int_0^t H_s dP_s$ for a unique $\bbF^P$-predictable process $H$.
But since $P$ is an $\bbF$-Brownian motion under $\tilde{\bbP}$,
$Q_t= Q_0+ M_t +A_t$ is also the unique canonical $(\bbF, \tilde{\bbP})$-semimartingale decomposition of $Q$.
It follows that $R = H$, and therefore, $R_t = r(t,P)$ for a predictable function 
$r :[0,T]\times C([0,T],\bbR^n) \to \bbR^{d\times n}$. This shows that {\rm (A4)} holds. 
Since (A1)--(A3) of Theorem \ref{mainthm0} are satisfied as well, one obtains that
$(Y_t,Z_t) = (q(t,W_t),r(t,W))$ is a solution of the BSDE \eqref{markovian}.
Moreover, since $q$ is continuous and $q(t,P_t)$ an {\Ito} process, it follows from 
Theorem 1 of Chitashvili and Mania (1996) that $r(t,P)= \nabla_xq(t,P_t)$, where $\nabla_x q$ is a bounded
weak derivative of $q$ with respect to $x$ in the Sobolev sense. This shows that 
$(q(t,W_t), \nabla_x q(t,W_t))$ is a solution of  \eqref{markovian} 
in $\bbS^{\infty}(\bbR^d) \times \bbH^{\infty}(\bbR^{d \times n})$.

Now assume $(\tilde{Y}, \tilde{Z})$ is another solution of \eqref{markovian} 
in $\bbS^{\infty}(\bbR^d) \times \bbH^{\infty}(\bbR^{d \times n})$ and 
let $L$ be a common bound for $Y,Y',Z, Z'$. Then 
$(Y,Z)$ and $(Y',Z')$ are both solutions of the modified BSDE 
$$
Y_t= h(W_T) + \int_t^T f(s,W_s,\pi_L(Y_s,Z_s))ds - \int_t^T Z_sdW_s,
$$
where
\begin{align*}
f(t,x,y,z) :=F(t,x,y,z)+zG(t,x,y,z) \quad \mbox{and} \quad
\pi_L(y,z):=\brak{\min\crl{1,L/|y|}y, \min\crl{1,L/|z|}z}.
\end{align*}
Since this BSDE satisfies the conditions of Pardoux and Peng (1990), it has a unique solution
in $\bbS^{\infty}(\bbR^d) \times \bbH^{\infty}(\bbR^{d \times n})$,
and it follows that $(Y,Z)=(\tilde{Y},\tilde{Z})$.
\end{proof}

\begin{Remark} \label{rem:del}
The assumptions of Proposition \ref{prop:del} can be slightly relaxed such that
the conditions of Theorem 2.6 of Delarue (2002) are still met. Then the same arguments 
yield the existence of a bounded solution $(Y,Z)$ to the BSDE \eqref{markovian}.
Uniqueness can be shown by using Pardoux (1999) instead of Pardoux and Peng (1990).
Alternatively, the assumptions of Proposition \ref{prop:del} can 
be modified such that they imply some other FBSDE result, such as
e.g., the one of Pardoux and Tang (1999).
\end{Remark}

\setcounter{equation}{0}
\section{Projectable quadratic BSDEs}
\label{sec:proj}

\begin{definition} 
We call a multidimensional BSDE projectable if its driver can be written as
\be \label{projectabledriver}
f(s,y,z)=P(s,a^Ty,a^Tz)+yQ(s,a^Ty,a^Tz)+zR(s,a^Ty,a^Tz) 
\ee
for a constant vector $a \in \bbR^d$ and predictable functions
\begin{align*}
&P: [0,T] \times \Omega \times\bbR\times\bbR^{1\times n}\to\bbR^{d\times 1}\\
&Q: [0,T] \times \Omega \times\bbR\times\bbR^{1\times n}\to\bbR\\
&R: [0,T] \times \Omega \times\bbR\times\bbR^{1\times n}\to\bbR^{n\times 1}.
\end{align*}
\end{definition} 

A projectable BSDE becomes one-dimensional if projected on the line
generated by $a\in \bbR^{d\times 1}$:
\begin{align*}
a^T Y_t = & a^T\xi+\int_t^T a^T \crl{P(s,a^TY_s,a^TZ_s) + Y_s Q(s,a^TY_s,a^TZ_s) + Z_s R(s,a^TY_s,a^TZ_s)}ds\\
& -\int_t^T a^TZ_s dW_s.
\end{align*}
In the following theorem we consider a projectable BSDE under conditions ensuring 
that the projected BSDE has a solution. This makes it possible to derive the existence of a solution to the 
multidimensional BSDE. 

\begin{theorem} \label{oned}
Consider a bounded terminal condition $\xi \in L^{\infty}({\cal F}_T)^d$ and a driver 
$f$ of the form \eqref{projectabledriver} such that 
$$
\abs{P(s, u,v)} \le C(1+|u|), \quad \abs{Q(s, u,v)} \le C, \quad \abs{R(s,u,v)} \le C + \rho(|u|)|v| 
$$
for a constant $C \in \mathbb{R}_+$ and a nondecreasing function
$\rho:\bbR_+\to \bbR_+$. Then the BSDE 
\be \label{orBSDE}
Y_t = \xi + \int_t^T f(s,Y_s,Z_s) ds - \int_t^T Z_s dW_s
\ee
has a solution
$(Y,Z) \in \bbS^\infty(\bbR^d)\times \bbH^{\rm BMO}(\bbR^{d \times n})$. 

Moreover, if
\[
F(s,u,v)= a^T P(s,u,v) + u Q(s, u,v)+ v R(s, u,v)
\]
satisfies
\be \label{Lip}
|F(s,u,v) - F(s,u',v') \le C|u-u'| + C (1+ |v| \vee |v'|)|v-v'|,
\ee
then \eqref{orBSDE} has only one solution $(Y,Z)$ in $\bbS^\infty(\bbR^d)\times \bbH^{\rm BMO}(\bbR^{d \times n})$.
\end{theorem}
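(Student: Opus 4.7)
The strategy is to exploit the projectable structure: left-multiply \eqref{orBSDE} by $a^T$ to reduce to a one-dimensional quadratic BSDE that can be solved via Kobylanski (2000), and then recover the full multidimensional solution by Girsanov-transforming the residual equation into a linear BSDE for $(Y,Z)$.

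For existence, set $U_t := a^T Y_t$ and $V_t := a^T Z_t$. Multiplying \eqref{orBSDE} by $a^T$ yields
\[
U_t = a^T\xi + \int_t^T F(s,U_s,V_s)\,ds - \int_t^T V_s\,dW_s,
\]
and the bounds on $P,Q,R$ give $|F(s,u,v)| \le \tilde C(1+|u|) + \rho(|u|)|v|^2$. After a preliminary truncation of $F$ in $u$ and an a priori $L^\infty$-bound on $U$ that depends only on $\|a^T\xi\|_\infty$, $C$, and $T$, this falls within Kobylanski's framework, yielding $(U,V) \in \bbS^\infty(\bbR) \times \bbH^{\rm BMO}(\bbR^{1\times n})$ (the BMO bound on $V$ being obtained from the boundedness of $U$ by a standard It\^o energy estimate). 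Now substitute $(U,V)$ into the coefficients to form the bounded processes $\bar P_s, \bar Q_s$ and the BMO process $\bar R_s$ (the latter because $|\bar R_s| \le C + \rho(\|U\|_\infty)|V_s|$); then \eqref{orBSDE} becomes the linear BSDE
\[
Y_t = \xi + \int_t^T \crl{\bar P_s + Y_s \bar Q_s + Z_s \bar R_s}\,ds - \int_t^T Z_s\,dW_s.
\]
Under $\tilde\bbP := \mathcal{E}^{\bar R}_T \cdot \bbP$, the process $\tilde W_t := W_t - \int_0^t \bar R_s\,ds$ is a Brownian motion and the equation becomes a linear BSDE in $(Y,Z)$ driven by $\tilde W$, with Lipschitz-in-$Y$ driver $\bar Q_s$ and bounded inhomogeneity $\bar P_s$; classical theory supplies a unique $\bbS^2\times\bbH^2$-solution under $\tilde\bbP$. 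Boundedness of $Y$ follows from the closed-form expression $Y_t = \Gamma_t^{-1}\tilde\bbE_t\brak{\Gamma_T\xi + \int_t^T \Gamma_s \bar P_s\,ds}$ with $\Gamma_t := \exp\brak{\int_0^t \bar Q_s\,ds}$, and applying It\^o to $|Y|^2$ under $\tilde\bbP$ together with the boundedness of $Y,\bar P,\bar Q$ gives $Z \in \bbH^{\rm BMO}(\tilde\bbP)$, which coincides with $\bbH^{\rm BMO}(\bbP)$ by the invariance noted in the preliminaries.

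For uniqueness, suppose $(Y,Z)$ and $(Y',Z')$ are two solutions in $\bbS^\infty(\bbR^d) \times \bbH^{\rm BMO}(\bbR^{d\times n})$ with projections $(U,V), (U',V')$ solving the scalar BSDE driven by $F$. Hypothesis \eqref{Lip} lets me write $F(s,U_s,V_s) - F(s,U'_s,V'_s) = \alpha_s(U_s - U'_s) + (V_s - V'_s)\beta_s$ with $|\alpha_s| \le C$ and $|\beta_s| \le C(1 + |V_s| \vee |V'_s|)$; since $V,V' \in \bbH^{\rm BMO}$, so is $\beta$. A Girsanov change of measure using $\beta$ turns the difference equation into a linear BSDE with bounded coefficient and zero terminal condition, forcing $U \equiv U'$ and $V \equiv V'$. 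Once the projections agree, $(Y,Z)$ and $(Y',Z')$ solve the same frozen linear BSDE as in the existence step, and the uniqueness of that linear equation under $\tilde\bbP$ gives $(Y,Z) = (Y',Z')$.

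The main technical hurdle is the BMO control of $V$ that is needed for both Girsanov steps: this is not furnished directly by Kobylanski (2000) but is extracted from the boundedness of $U$ and the quadratic structure of $F$ via an It\^o estimate. A secondary delicate point is verifying that the linearizing process $\beta$ in the uniqueness argument lies in $\bbH^{\rm BMO}$, which is exactly what the factor $(1 + |v|\vee|v'|)$ in \eqref{Lip} was designed to ensure.
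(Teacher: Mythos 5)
Your overall route is the same as the paper's: project with $a^T$, solve the scalar quadratic BSDE via Kobylanski, extract the BMO bound on $V$ from the boundedness of $U$, freeze the coefficients along $(U,V)$, and solve the resulting linear equation after a Girsanov change of measure with the BMO process $\bar R$. Your uniqueness argument (linearizing $F$ and removing $\beta$ by a second Girsanov transform, rather than invoking Kobylanski's Theorem 2.6 for the scalar equation) is a legitimate, self-contained variant, and your observation that \eqref{Lip} is exactly what puts $\beta$ in $\bbH^{\rm BMO}$ is correct.

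However, the existence argument as written has a genuine gap at the sentence ``then \eqref{orBSDE} becomes the linear BSDE.'' The linear equation with frozen coefficients $\bar P_s=P(s,U_s,V_s)$, $\bar Q_s$, $\bar R_s$ is \emph{not} the original equation: the driver of \eqref{orBSDE} evaluates $P,Q,R$ at $(a^TY_s,a^TZ_s)$, and at this point you have no relation between the solution $(Y,Z)$ of the frozen linear BSDE and the pair $(U,V)$ you used to freeze it. Asserting the identification is circular, since $a^TY=U$, $a^TZ=V$ is precisely what must be proved before you can conclude that $(Y,Z)$ solves \eqref{orBSDE}. The paper closes this loop explicitly: it notes that $(a^TY,a^TZ)$ solves the one-dimensional \emph{linear} BSDE
\[
\tilde U_t = a^T\xi + \int_t^T \brak{a^T \bar P_s + \tilde U_s \bar Q_s + \tilde V_s \bar R_s}\,ds - \int_t^T \tilde V_s\,dW_s,
\]
that this linear equation has a unique solution in $\bbS^\infty(\bbR)\times\bbH^{\rm BMO}(\bbR^{1\times n})$ (by the same conditional-expectation representation under $\tilde\bbP$ you use for the $d$-dimensional one), and that $(U,V)$ is also a solution of it because $F(s,U_s,V_s)=a^T\bar P_s+U_s\bar Q_s+V_s\bar R_s$; hence $(a^TY,a^TZ)=(U,V)$ and only then does $(Y,Z)$ solve \eqref{orBSDE}. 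You have all the tools for this step (you use exactly this kind of frozen-equation uniqueness in your uniqueness paragraph), but the existence proof is incomplete without it.
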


\begin{proof}
By Theorem 2.3 of Kobylanski (2000), the one-dimensional BSDE
\begin{align}\label{1dBSDE}
U_t = a^T \xi + \int_t^T F \brak{s, U_s, V_s}ds - \int_t^T V_s dW_s
\end{align}
has a solution $(U,V) \in \bbS^{\infty}(\bbR) \times
\bbH^2(\bbR^{1 \times n})$. Therefore,
there exists a constant $K \ge 0$ such that $|F(s,U_s,V_s)| \le K(1+ |V_s|^2)$,
and it follows like in the proof of Proposition 2.1 in Briand and Elie (2013) that 
$V$ is in $\bbH^{\rm BMO}(\mathbb{R}^{1 \times n})$. 
Denote $P_s := P(s,U_s,V_s)$, $Q_s := Q(s,U_s,V_s)$, $R_s = R(s,U_s,V_s)$ and
assume the multidimensional linear BSDE 
\be \label{givenBSDE}
Y_t = \xi + \int_t^T (P_s + Y_s Q_s + Z_s R_s) ds - \int_t^T Z_s dW_s
\ee
has a solution 
$(Y,Z) \in \bbS^{\infty}(\mathbb{R}^d) \times \bbH^{\rm BMO}(\mathbb{R}^{d \times n})$.
It follows from the assumptions that $|R_s| \le C + \rho(\|U\|_{\bbS^\infty})|V_s|$.
So $R$ is in $\bbH^{\rm BMO}(\mathbb{R}^{n\times 1})$, and $\tilde{\bbP} := {\cal E}^R_T \cdot \bbP$ is an equivalent 
probability measure under which $\tilde{W}_t = W_t - \int_0^t R_s ds$
is a Brownian motion.
Now one can write
\be \label{YW}
Y_t = \xi +  \int_t^T (P_s + Y_s Q_s) ds - \int_t^T Z_s d\tilde{W}_s,
\ee
from which it follows that
$$
e^{\int_0^t Q_u du} Y_t = e^{\int_0^T Q_u du} \xi + \int_t^T e^{\int_0^s Q_u du} P_s ds 
- \int_t^T e^{\int_0^s Q_u du} Z_s d\tilde{W}_s,
$$
and therefore,
\be \label{QE}
e^{\int_0^t Q_s ds} Y_t = \tilde{\bbE}_t 
\edg{e^{\int_0^T Q_u du} \xi + \int_t^T e^{\int_0^s Q_u du} P_s ds},
\ee
where $\tilde{\bbE}$ denotes expectation with respect to $\tilde{\bbP}$.
This uniquely determines $Y$. Now $Z$ is uniquely given by \eqref{YW}.
To show that \eqref{givenBSDE} has a solution in 
$\bbS^{\infty}(\mathbb{R}^d) \times \bbH^{\rm BMO}(\mathbb{R}^{d \times n})$, one can
define $Y$ by \eqref{QE}, which is equivalent to 
$$
\Gamma_t Y_t = \bbE_t \edg{\Gamma_T \xi + \int_t^T \Gamma_s P_s ds},
$$
where $\Gamma$ is the unique solution of the SDE 
$$
d\Gamma_t = \Gamma_t (Q_s ds + R^T_s dW_s), \quad \Gamma_0 = 1.
$$
Then $Y$ belongs to $\mathbb{S}^{\infty}(\mathbb{R}^d)$, and by the martingale representation theorem, 
there exists a unique predictable process $Z$ such that $\Gamma Z$ belongs to 
$\bbH^2(\mathbb{R}^{d \times n})$ and
$$
\int_0^T \Gamma_s (Y_s R^T_s + Z_s) dW_s = 
\Gamma_T \xi + \int_0^T \Gamma_s P_s ds - \bbE \edg{\Gamma_T \xi + \int_0^T \Gamma_s P_s ds}.
$$
Since $Y_0 = \bbE \edg{\Gamma_T \xi + \int_0^T \Gamma_s P_s ds}$, one has
\[
Y_0 + \int_0^t \Gamma_s (Y_sR^T_s +Z_s) dW_s = \bbE_t \edg{\Gamma_T \xi + \int_0^T \Gamma_s P_s ds}
= \Gamma_t Y_t + \int_0^t \Gamma_s P_sds.
\]
Therefore,
$$
Y_t = \Gamma^{-1}_t \brak{Y_0 + \int_0^t \Gamma_s (Y_sR^T_s +Z_s) dW_s -  \int_0^t \Gamma_s P_s ds},
$$
then one obtains
$$
dY_t = - (P_t + Y_t Q_t + Z_t R_t) dt + Z_t dW_t, \quad Y_T = \xi.
$$
In particular, \eqref{YW} holds, from which it can be seen that $M_t = \int_0^t Z_s d\tilde{W}_s$ is a bounded 
$\tilde{\bbP}$-martingale. Since $\tilde{\bbE}_{\tau} \int_{\tau}^T |Z|^2_s ds = \tilde{\bbE}_{\tau} (M_T-M_{\tau})^2$, 
this shows that $Z$ is in $\bbH^{\rm BMO}(\mathbb{R}^{d \times n})$ with respect to 
$\tilde{\bbP}$ and hence, also with respect to $\bbP$. So we have shown that for a given solution $(U,V) \in
\bbS^{\infty}(\bbR) \times \bbH^{\rm BMO}(\bbR^{1 \times n})$
of the one-dimensional BSDE \eqref{1dBSDE}, the linear BSDE \eqref{givenBSDE} has a unique 
solution $(Y,Z) \in \bbS^{\infty}(\mathbb{R}^d) \times \bbH^ {\rm BMO}(\mathbb{R}^{d \times n})$.
$(a^T Y, a^T Z)$ solves the one-dimensional linear BSDE 
$$
\tilde{U}_t = a^T \xi + \int_t^T
(a^T P_s + \tilde{U}_s Q_s + \tilde{V}_s R_s) ds - \int_t^T \tilde{V}_s dW_s,
$$
which, like \eqref{givenBSDE}, can be shown to have a unique solution in
$\bbS^{\infty}(\bbR) \times \bbH^{\rm BMO}(\bbR^{1\times n})$. It follows that
$(a^T Y, a^T Z) = (U,V)$, from which one obtains that $(Y,Z)$ solves
the original BSDE \eqref{orBSDE}.

If the additional condition \eqref{Lip} holds, one obtains from Theorem 2.6 in 
Kobylanski (2000) that the one-dimensional BSDE \eqref{1dBSDE} admits only one solution 
$(U,V)$ in $\bbS^{\infty}(\mathbb{R}) \times \bbH^{\rm BMO}(\mathbb{R}^{1\times n})$, and it follows that
\eqref{orBSDE} has a unique solution $(Y,Z)$ in $\bbS^{\infty}(\mathbb{R}^d) \times \bbH^{\rm BMO}(\mathbb{R}^{d \times n})$.
\end{proof}

\setcounter{equation}{0}
\section{Subquadratic BSDEs}
\label{sec:sub}

In the case where the BSDE \eqref{bsde} is not Markovian or projectable, we assume the driver 
$f(s,y,z)$ to be of strictly subquadratic growth in $z$. 
For constants $C, C_i \in \mathbb{R}_+$, $\varepsilon \in (0,1)$ and a nondecreasing function 
$\rho:\bbR_+\to\bbR_+$, consider the following conditions:
\begin{itemize}
\item[(B1)] $\xi$ is a $d$-dimensional $\scF_T$-measurable random vector satisfying $|\xi| \le C$
\item[(B2)] $|f(s,y,z)| \le C\brak{1+|y|+\rho(|y|)|z|^{2-\varepsilon}}$
\item[(B3)] 
$\abs{f(t,y,z)-f(t,y',z')} \le \rho\brak{|y|\vee|y'|}\brak{|y-y'|+ \brak{1+\brak{|z|\vee|z'|}^{1-\varepsilon}}|z-z'|}$
\item[(B4)] $f(s,y,z)=F(s,y,z)+G(s,y,z)$ and 
$$
y^T F(s,y,z) \le C |y|(1+|y|+|z|), \quad y^T G(s,y,z) \leq |y^T z| \rho(|y|)|z|.
$$
\end{itemize}
The main result of this section is the following

\begin{theorem} \label{thmsub}
A BSDE 
\be \label{bsdesub}
Y_t = \xi + \int_t^T f(s,Y_s,Z_s) ds - \int_t^T Z_s dW_s
\ee
satisfying {\rm (B1)--(B4)} has a unique solution
$(Y,Z)$ in $\bbS^\infty(\bbR^d)\times \bbH^{\rm BMO}(\bbR^{d \times n})$, and
$$
|Y_t| \le (C+1) \exp \brak{\frac{(C+1)^2}{2} (T-t)}.
$$
\end{theorem}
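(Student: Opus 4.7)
My plan is to combine a short-time contraction argument in $\bbS^\infty\times\bbH^{\rm BMO}$, made possible by the strictly subquadratic growth in (B2)--(B3), with a uniform a priori bound on $|Y|$ derived from the structural splitting (B4) and a Girsanov change of measure, and then to paste the short-time solutions together. The crux is the uniform a priori bound: it must depend only on $C$ and $T$ so that the short-time horizon can be chosen once and for all, independently of the iteration step.

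For short-time existence and uniqueness on $[T-\delta,T]$, I would fix constants $M,N>0$ and work on the ball
\[
\scB=\bigl\{(Y,Z)\in\bbS^\infty_{[T-\delta,T]}(\bbR^d)\times\bbH^{\rm BMO}_{[T-\delta,T]}(\bbR^{d\times n}):\|Y\|_{\bbS^\infty}\le M,\ \|Z\|_{\rm BMO}\le N\bigr\}.
\]
Given $(Y,Z)\in\scB$, set $Y'_t:=\bbE_t\edg{\xi+\int_t^T f(s,Y_s,Z_s)\,ds}$ and define $Z'$ by the martingale representation of the bounded martingale $(Y'_t)$. Condition (B2) combined with the H\"older bound $\bbE_\tau\int_\tau^T|Z_s|^{2-\veps}\,ds\le(T-\tau)^{\veps/2}\|Z\|_{\rm BMO}^{2-\veps}$ ensures that $\Phi(Y,Z):=(Y',Z')\in\scB$ for $\delta$ small, while (B3) together with the same H\"older device applied to differences yields
\[
\norm{\Phi(Y^1,Z^1)-\Phi(Y^2,Z^2)}_{\bbS^\infty\times\bbH^{\rm BMO}}\le \omega(\delta)\,\norm{(Y^1,Z^1)-(Y^2,Z^2)}_{\bbS^\infty\times\bbH^{\rm BMO}}
\]
with $\omega(\delta)\downarrow 0$ as $\delta\downarrow 0$. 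Banach's fixed-point theorem then produces a unique short-time solution in $\scB$.

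For the a priori estimate I apply \Ito's formula to $e^{at}|Y_t|^2$ to obtain
\[
e^{at}|Y_t|^2+\int_t^T e^{as}|Z_s|^2\,ds=e^{aT}|\xi|^2+\int_t^T e^{as}\brak{2Y_s^T f(s,Y_s,Z_s)-a|Y_s|^2}ds-\int_t^T 2e^{as}Y_s^T Z_s\,dW_s.
\]
The key step is to absorb $G$ by a change of measure. By (B4), $y^T G(s,y,z)\le|y^T z|\rho(|y|)|z|$, so setting
\[
\eta_s:=\rho(|Y_s|)\,|Z_s|\,(Y_s^T Z_s)^T/|Y_s^T Z_s|\qquad(\text{and } 0 \text{ where } Y_s^T Z_s=0)
\]
gives $|\eta_s|\le\rho(\|Y\|_{\bbS^\infty})|Z_s|$, hence $\eta\in\bbH^{\rm BMO}(\bbR^{n\times 1})$ and $2Y_s^T G(s,Y_s,Z_s)\le 2(Y_s^T Z_s)\eta_s$. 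Then $\tilde\bbP:=\scE^{-\eta}_T\cdot\bbP$ is equivalent to $\bbP$ and $\tilde W_t:=W_t+\int_0^t\eta_s\,ds$ is a $\tilde\bbP$-Brownian motion. Two applications of Young's inequality, namely $2C|Y|\le|Y|^2+C^2$ and $2C|Y||Z|\le C^2|Y|^2+|Z|^2$, yield
\[
2Y_s^T F(s,Y_s,Z_s)-|Z_s|^2\le 2C|Y_s|(1+|Y_s|+|Z_s|)-|Z_s|^2\le C^2+(C+1)^2|Y_s|^2,
\]
so choosing $a=(C+1)^2$ collapses the $|Y|^2$ and $|Z|^2$ contributions and the identity reduces to
\[
e^{at}|Y_t|^2\le e^{aT}|\xi|^2+C^2\int_t^T e^{as}\,ds-\int_t^T 2e^{as}Y_s^T Z_s\,d\tilde W_s.
\]
Since $Y$ is bounded and $Z$ remains in $\bbH^{\rm BMO}$ under $\tilde\bbP$, the stochastic integral is a true $\tilde\bbP$-martingale, so taking $\tilde\bbE_t$ and using (B1) gives $|Y_t|^2\le C^2(1+1/a)e^{a(T-t)}\le (C+1)^2 e^{(C+1)^2(T-t)}$, which is the claimed bound.

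Global existence, uniqueness and the stated estimate now follow by backward iteration. Set $M^\ast:=(C+1)\exp((C+1)^2T/2)$ and let $\delta=\delta(M^\ast)$ be the short-time horizon of Step 1 valid for any terminal condition of $L^\infty$-norm at most $M^\ast$. On $[T-\delta,T]$ the contraction produces a unique solution, and the a priori estimate applied on this sub-interval shows $|Y_{T-\delta}|\le M^\ast$; taking $Y_{T-\delta}$ as new terminal we extend to $[T-2\delta,T-\delta]$ and iterate, exhausting $[0,T]$ in $\lceil T/\delta\rceil$ steps. Uniqueness in $\bbS^\infty\times\bbH^{\rm BMO}$ is obtained analogously: any two solutions satisfy the same uniform bound $M^\ast$, so they lie in $\scB$ on each sub-interval and coincide there by the local uniqueness of Step 1.
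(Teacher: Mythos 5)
Your overall strategy coincides with the paper's: a Banach fixed-point argument on a ball of $\bbS^\infty_{[T-\delta,T]}(\bbR^d)\times\bbH^{\rm BMO}_{[T-\delta,T]}(\bbR^{d\times n})$ using (B2)--(B3) and the conditional H\"older bound $\bbE_\tau\int_\tau^T|Z_s|^{2-\veps}ds\le (T-\tau)^{\veps/2}\|Z\|_{\rm BMO}^{2-\veps}$; an a priori bound on $|Y|$ from (B4) via It\^o's formula and a Girsanov change of measure; and backward pasting with the step size fixed once and for all in terms of the a priori bound $(C+1)e^{(C+1)^2T/2}$, exactly as in the paper's Lemmas on local existence and on the bound, followed by the recursive construction.

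There is, however, a sign error in the Girsanov step of your a priori estimate, and as written the claimed reduction does not hold. You set $\eta_s=\rho(|Y_s|)|Z_s|(Y_s^TZ_s)^T/|Y_s^TZ_s|$, use $2Y_s^TG(s,Y_s,Z_s)\le 2(Y_s^TZ_s)\eta_s$, and then take $\tilde\bbP=\scE^{-\eta}_T\cdot\bbP$ with $\tilde W=W+\int_0^\cdot\eta_s\,ds$. Substituting $dW_s=d\tilde W_s-\eta_s\,ds$ into the martingale term $-2\int_t^Te^{as}Y_s^TZ_s\,dW_s$ produces the drift contribution $+2e^{as}(Y_s^TZ_s)\eta_s=+2e^{as}\rho(|Y_s|)|Z_s|\,|Y_s^TZ_s|$, i.e.\ a positive term of exactly the size of the bound on $2Y_s^TG$, instead of cancelling it; the resulting drift is then only dominated by $2Y_s^TF+4\rho(|Y_s|)|Y_s||Z_s|^2-a|Y_s|^2-|Z_s|^2$, and the $|Z_s|^2$ term can no longer be absorbed, so the inequality $e^{at}|Y_t|^2\le e^{aT}|\xi|^2+C^2\int_t^Te^{as}ds-\int_t^T2e^{as}Y_s^TZ_s\,d\tilde W_s$ does not follow. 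The fix is to reverse the orientation, as in the paper: take $\tilde\bbP=\scE^{\eta}_T\cdot\bbP$, under which $\tilde W=W-\int_0^\cdot\eta_s\,ds$ is a Brownian motion; then $dW=d\tilde W+\eta\,ds$ contributes $-2(Y^TZ)\eta\,ds$ to the drift, which together with (B4) removes the $G$-term, and the rest of your computation (Young's inequality, $a=(C+1)^2$, the true-martingale property of $\int e^{as}Y_s^TZ_s\,d\tilde W_s$ since $Y$ is bounded and $Z$ remains BMO under the equivalent measure) goes through and yields the stated bound. A further small point: in the final uniqueness argument, membership of an arbitrary solution in the ball requires control of its BMO-norm as well, not only of $\|Y\|_{\bbS^\infty}$; this is harmless, since for two given solutions you may enlarge the radius of the ball (and shrink $\delta$ accordingly) to dominate both of their norms before invoking the local contraction, but it should be stated.
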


We prove Theorem \ref{thmsub} by first showing that the BSDE \eqref{bsdesub}
has a unique solution for short time intervals and then constructing a 
solution on $[0,T]$ recursively backwards in time.

For small $h >0$ we use Banach's fixed point theorem to prove the existence 
and uniqueness of a solution on $[T-h, T]$. For $R \in \mathbb{R}_+$, define
$$
\scB_R:=\crl{(Y,Z)\in \bbS^{\infty}_{[T-h, T]}(\bbR^d)\times
\bbH^{\rm BMO}_{[T-h, T]}(\bbR^{d\times n}): \norm{Y}_{\bbS^\infty_{[T-h, T]}} \le R,
\norm{Z}_{{\rm BMO}_{[T-h, T]}} \le R}.
$$

\begin{lemma} \label{lemma:local}
Assume that $R \ge 3C$ and {\rm (B1)--(B3)} hold. Then there exists a constant $\delta>0$ only 
depending on $C$, $R$ and $\rho(R)$ such that for all $h \in (0, \delta]$, the BSDE \eqref{bsdesub} 
has a unique solution $(Y,Z) \in \scB_R$ on $[T-h, T]$.
\end{lemma}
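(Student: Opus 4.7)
The plan is to prove the lemma by setting up a Picard-type contraction on $\scB_R$. For $(U,V)\in\scB_R$ I would define $(Y,Z)=\Phi(U,V)$ to be the unique solution in $\bbS^2_{[T-h,T]}(\bbR^d)\times\bbH^2_{[T-h,T]}(\bbR^{d\times n})$ of the linear BSDE
\[
Y_t=\xi+\int_t^T f(s,U_s,V_s)\,ds-\int_t^TZ_s\,dW_s,
\]
which exists by the martingale representation theorem as soon as one checks that $f(s,U_s,V_s)\in\bbH^2_{[T-h,T]}(\bbR^d)$. This integrability follows from (B2): since $V\in\bbH^{\rm BMO}$ lies in every $\bbH^p$ via the BMO energy inequalities, $|V|^{2-\veps}$ is square-integrable. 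The main task is then to show that for $h$ sufficiently small, $\Phi$ maps $\scB_R$ into itself and is a contraction with respect to the product norm $\|Y\|_{\bbS^\infty_{[T-h,T]}}+\|Z\|_{{\rm BMO}_{[T-h,T]}}$.

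For the self-mapping property I would use conditional expectation: $Y_t=\bbE_t[\xi+\int_t^T f(s,U_s,V_s)\,ds]$, so by (B1), (B2),
\[
|Y_t|\le C+Ch(1+R)+C\rho(R)\,\bbE_t\!\int_t^T|V_s|^{2-\veps}\,ds.
\]
A conditional H\"older bound with exponents $2/(2-\veps)$ and $2/\veps$ together with Jensen and the BMO bound on $V$ yields
\[
\bbE_t\!\int_t^T|V_s|^{2-\veps}\,ds\le h^{\veps/2}\,R^{2-\veps},
\]
and since $R\ge 3C$, choosing $h$ small makes the right-hand side at most $R$. For the BMO bound on $Z$, I apply It\^o's formula to $|Y_s|^2$ and a stopping time $\tau$:
\[
\bbE_\tau\!\int_\tau^T|Z_s|^2\,ds\le C^2+2R\Bigl(Ch(1+R)+C\rho(R)h^{\veps/2}R^{2-\veps}\Bigr),
\]
again bounded by $R^2$ for small $h$ since $C^2\le R^2/9$.

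For the contraction estimate, given $(U^i,V^i)\in\scB_R$ and $(Y^i,Z^i)=\Phi(U^i,V^i)$, I would set $\Delta U=U^1-U^2$, etc., and apply (B3). The troublesome term $\rho(R)(|V^1|\vee|V^2|)^{1-\veps}|\Delta V|$ is handled by Cauchy--Schwarz:
\[
\bbE_\tau\!\int_\tau^T\!\!(|V^1|\vee|V^2|)^{1-\veps}|\Delta V_s|\,ds\le\Bigl(\bbE_\tau\!\int_\tau^T\!\!(|V^1|\vee|V^2|)^{2-2\veps}ds\Bigr)^{1/2}\!\!\Bigl(\bbE_\tau\!\int_\tau^T|\Delta V_s|^2 ds\Bigr)^{1/2},
\]
and the first factor is bounded by $h^{\veps/2}R^{1-\veps}$ via the same H\"older trick with exponents $1/(1-\veps)$ and $1/\veps$. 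Together with the trivial terms this gives $\|\Delta Y\|_{\bbS^\infty}\le\eta(h)(\|\Delta U\|_{\bbS^\infty}+\|\Delta V\|_{\rm BMO})$ with $\eta(h)\to 0$ as $h\downarrow 0$. It\^o on $|\Delta Y|^2$ then yields $\|\Delta Z\|_{\rm BMO}^2\le 2\|\Delta Y\|_{\bbS^\infty}\cdot\eta(h)(\|\Delta U\|_{\bbS^\infty}+\|\Delta V\|_{\rm BMO})$, so the whole map is an $O(\eta(h))$-contraction. Choosing $\delta$ so that $h\le\delta$ forces the contraction constant below $1$ completes the argument via Banach's fixed point theorem.

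The main obstacle is the joint control needed in the contraction step: the $|z|^{1-\veps}$ factor in the local Lipschitz condition (B3) prevents a direct Gronwall argument and forces the H\"older/Cauchy--Schwarz split above. It is precisely the strict subquadraticity $\veps>0$ that produces the saving $h^{\veps/2}$ which cannot be obtained in the quadratic case, explaining why this short-time contraction approach is unavailable for genuinely quadratic drivers.
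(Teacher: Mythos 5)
Your proposal is correct and follows essentially the same route as the paper: the same fixed-point map $\phi(y,z)=(Y,Z)$ defined via conditional expectation and martingale representation, the same self-mapping estimates from (B1)--(B2) with the H\"older-in-time gain $h^{\veps/2}$ plus an It\^o argument for the BMO bound on $Z$, and the same Cauchy--Schwarz/H\"older treatment of the $(|z|\vee|z'|)^{1-\veps}|z-z'|$ term in the contraction step. The only point treated more carefully in the paper is the verification (via $p$-integrability, Doob and Burkholder--Davis--Gundy) that the stochastic integrals appearing after It\^o's formula are true martingales before conditioning, a standard check you leave implicit.
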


\begin{proof}
In the whole proof we assume $t \in [T-h, T]$ and treat 
$\rho=\rho(R)$ as a constant. This is possible because $Y$ will turn out to be bounded by $R$.
For $(y,z)\in \scB_R$, define $\phi(y,z) := (Y,Z)$, where 
$$
Y_t = \bbE_t \brak{\xi + \int_t^T f(s,y_s,z_s)ds}, \quad T -h \le t \le T,
$$
and $Z$ is the unique $\mathbb{R}^{d \times n}$-valued predictable process satisfying
$$
\int_{T-h}^T Z_s dW_s = \xi + \int_{T-h}^Tf(s,y_s,z_s)ds - \bbE_{T-h} \brak{\xi + \int_{T-h} ^T f(s,y_s,z_s)ds}.
$$
Then
$$
|Y_t| \le C + \bbE_t \int_t^T|f(s,y_s,z_s)|ds, \quad T - h \le t \le T,
$$
and if $\tau$ is a stopping time with values in $[T-h,T]$, it follows from (B2) and H\"older's inequality that
\begin{eqnarray*}
&& \bbE_{\tau}\int_{\tau}^T|f(s,y_s,z_s)|ds \le C\bbE_{\tau} \int_{\tau}^T \brak{1+|y_s|+\rho|z_s|^{2 -\varepsilon}}ds\\
&\le& C h \brak{1+\norm{y}_{\bbS^\infty_{[T-h, T]}}} + C \rho
\bbE_{\tau} \int_{\tau}^T|z_s|^{2-\varepsilon}ds \le C h \brak{1+\norm{y}_{\bbS^\infty_{[T-h, T]}}}
+ C \rho h^{\varepsilon/2}\norm{z}_{\rm BMO_{[T-h, T]}}^{2-\veps}\\
&\le& Ch\brak{1+R} + C \rho h^{\varepsilon/2}R^{2-\veps}.
\end{eqnarray*}
Choose $h> 0$ so small that $Ch\brak{1+R} + C \rho h^{\varepsilon/2}R^{2-\veps} \le C$. 
Then 
$$
\bbE_{\tau}\int_{\tau}^T|f(s,y_s,z_s)|ds \le C \quad \mbox{and} \quad
\norm{Y}_{\bbS_{[T-h, T]}^\infty} \le 2C \le R.
$$
On the other hand, it follows from the definition of $(Y,Z)$ that 
\begin{equation}\label{ylipbsde}
Y_t = \xi+\int_t^Tf(s,y_s,z_s)ds - \int_t^TZ_sdW_s, \quad T - h \le t \le T.
\end{equation}
So one obtains from It\^{o}'s formula,
\begin{align*}
 |Y_{\tau}|^2+\int_{\tau}^T
 |Z_s|^2ds=|\xi|^2+2\int_{\tau}^T Y_s^Tf(s,y_s,z_s)ds-2\int_{\tau}^TY_s^TZ_sdW_s.
\end{align*}
Since $(y,z) \in \bbS^{\infty}_{[T-h, T]}(\bbR^d)\times \bbH^{\rm BMO}_{[T-h, T]}(\bbR^{d\times n})$,
it follows from (B1) and (B2) that $\xi + \int_{T - h}^T f(s,y_s,z_s)ds$ is $p$-integrable for some $p > 1$.
By Doob's maximal $L^p$-inequality, $\sup_{T-h \le t \le T} |\int_{T-h}^t Z_s dW_s|$ 
is also $p$-integrable. So one obtains from the Burkholder--Davis--Gundy inequality that
$\int_{T-h}^T |Y^T_s Z_s|^2 ds$ is $p/2$-integrable
and $\sup_{T-h \le t \le T} |\int_{T-h}^t Y^T_s Z_s dW_s|$ is $p$-integrable.
This implies that the local martingale $\int_{T-h}^t Y^T_s Z_s dW_s$ is a true martingale. 
Therefore,
\begin{align*}
\bbE_{\tau} \int_{\tau}^T |Z_s|^2ds \le
C^2+ 2 \norm{Y}_{\bbS_{[T-h, T]}^\infty} \bbE_{\tau} \int_{\tau}^T|f(s,y_s,z_s)|ds \le  5 C^2,
\end{align*}
which shows that $\norm{Z}_{{\rm BMO}_{[T-h, T]}} \le 3 C \le R$. 
So $\phi$ maps $\scB_R$ into itself.

Next, we show that $\phi$ is a contraction on $\scB_R$. Choose $(y,z),(y',z') \in \scB_R$ and 
denote $(Y,Z) = \phi (y,z)$, $(Y',Z') = \phi (y',z')$, $\Delta y=y-y'$, $\Delta z=z-z'$,
$\Delta Y=Y-Y'$ and $\Delta Z=Z-Z'$. Then,
\be \label{DeltaY}
\Delta Y_t=\int_t^T\brak{f(s,y_s,z_s)-f(s,y'_s,z'_s)}ds-\int_t^T\Delta Z_sdW_s,
\ee
and by {\Ito}'s formula,
\be \label{DeltaZ}
|\Delta Y_{\tau}|^2+\int_{\tau}^T|\Delta Z_s|^2ds =2\int_{\tau}^T\Delta Y_s^T
\brak{f(s,y_s,z_s)-f(s,y'_s,z'_s)}ds-2\int_{\tau}^T\Delta Y_s^T\Delta Z_s dW_s.
\ee
It follows from (B3) that
\begin{align*}
& \bbE_{\tau} \int_{\tau}^T\abs{f(s,y_s,z_s)-f(s,y'_s,z'_s)}ds \le \bbE_{\tau}
\int_{\tau}^T\rho \brak{|\Delta y_s|+\brak{1+\brak{|z_s|\vee|z'_s|}^{1-\veps}}|\Delta z_s|}ds\\
&\le \rho h \norm{\Delta y}_{\bbS_{[T-h, T]}^\infty}+\rho\sqrt{\bbE_\tau\int_\tau^T\brak{1+\brak{|z_s|\vee|z'_s|}^{1-\veps}}^2ds}\sqrt{\bbE_\tau\int_\tau^T
\abs{\Delta z_s}^2ds}\\
&\leq \rho h\norm{\Delta y}_{\bbS_{[T-h, T]}^\infty}
+\rho\sqrt{2\brak{h+\bbE_\tau\int_\tau^T\brak{|z_s| + |z'_s|}^{2-2\veps}ds}} \norm{\Delta
z}_{{\rm BMO}_{[T-h,T]}}\\
&\le \rho h\norm{\Delta y}_{\bbS_{[T-h, T]}^\infty}
+\rho \sqrt{2 \brak{h + h^\veps (2R)^{2-2\veps}}}\norm{\Delta z}_{{\rm BMO}_{[T-h,T]}}.
\end{align*}
So for $h >0$ small enough, one has
\begin{align*}
& \bbE_{\tau} \int_{\tau}^T\abs{f(s,y_s,z_s)-f(s,y'_s,z'_s)}ds
\le \frac{1}{4} \brak{\norm{\Delta y}_{\bbS^\infty_{[T-h,T]}} + \norm{\Delta z}_{{\rm BMO}_{[T-h,T]}}}\\
& \le \frac{1}{2} \brak{\norm{\Delta y}_{\bbS^\infty_{[T-h,T]}} \vee \norm{\Delta z}_{{\rm BMO}_{[T-h,T]}}}.
\end{align*}
Since $\int_{T-h}^t \Delta Z_s dW_s$ and $\int_{T-h}^t \Delta Y^T_s \Delta Z_s dW_s$ are 
martingales, one obtains from \eqref{DeltaY},
$$
\norm{\Delta Y}_{\bbS_{[T-h, T]}^\infty} \le \frac{1}{2} 
\brak{\norm{\Delta y}_{\bbS_{[T-h, T]}^\infty} \vee \norm{\Delta z}_{{\rm BMO}_{[T-h,T]}}},
$$
and from \eqref{DeltaZ},
\begin{align*}
& \norm{\Delta Z}^2_{{\rm BMO}_{{[T-h,T]}}} \le 2 \norm{\Delta Y}_{\bbS^\infty_{[T-h,T]}} 
\sup_{\tau} \bbE_{\tau} \int_{\tau}^T\abs{f(s,y_s,z_s)-f(s,y'_s,z'_s)}ds\\
& \le \frac{1}{2} \brak{\norm{\Delta y}_{\bbS^\infty_{[T-h,T]}} \vee \norm{\Delta z}_{{\rm BMO}_{[T-h,T]}}}^2.
\end{align*}
This shows that there exists a $\delta > 0$ only depending on $C$, $R$ and $\rho(R)$ such that for 
$h \in (0,\delta]$, $\phi$ is a contraction on ${\cal B}_R$, from which it follows that on $[T-h, T]$, 
\eqref{bsdesub} has a unique solution in ${\cal B}_R$.
\end{proof}

In the next step we show that under the additional condition (B4), the solution of Lemma
\ref{lemma:local} satisfies 
$$|Y_t| \le (C+1) \exp \brak{\frac{(C+1)^2}{2} (T-t)}.
$$
This allows us to construct a solution on the whole time interval $[0,T]$ recursively.

\begin{lemma} \label{lemma:bound}
Assume {\rm (B1)} and {\rm (B4)} hold, and for some $h >0$, the BSDE \eqref{bsdesub} has a solution
$(Y,Z) \in \bbS^{\infty}_{[T- h,T]}(\mathbb{R}^d) 
\times \bbH^{\rm BMO}_{[T-h,T]}(\mathbb{R}^{d \times n})$. Then
$$
|Y_t| \le (C+1) \exp \brak{\frac{(C+1)^2}{2} (T-t)}
$$
for $t\in[T-h,T]$.
\end{lemma}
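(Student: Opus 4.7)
The plan is to apply It\^o's formula to $|Y_t|^2$, use the decomposition $f = F + G$ from (B4) to separate the two qualitatively different contributions of the driver, absorb the potentially quadratic-in-$Z$ contribution of $G$ via an equivalent Girsanov change of measure, and conclude by a backward Gronwall argument. The starting point is
\begin{equation*}
|Y_t|^2 = |\xi|^2 + \int_t^T \brak{2 Y_s^T f(s,Y_s,Z_s) - |Z_s|^2}\,ds - 2\int_t^T Y_s^T Z_s\, dW_s,
\end{equation*}
after which $2 Y_s^T f$ splits as $2 Y_s^T F + 2 Y_s^T G$.

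The crux will be linearizing the $G$-contribution. The bound in (B4) reads $Y_s^T G \le |Y_s^T Z_s|\rho(|Y_s|)|Z_s|$, and I will realize its right-hand side as $Y_s^T Z_s\, h_s$ for the $\bbR^{n\times 1}$-valued predictable process
\begin{equation*}
h_s := \rho(|Y_s|)\,|Z_s|\,\frac{(Y_s^T Z_s)^T}{|Y_s^T Z_s|}\indicator{\{Y_s^T Z_s \ne 0\}}.
\end{equation*}
Then $|h_s| \le \rho(\norm{Y}_{\bbS^\infty_{[T-h,T]}})|Z_s|$, so $h \in \bbH^{\rm BMO}_{[T-h,T]}(\bbR^{n\times 1})$, and Girsanov provides an equivalent measure $\tilde{\bbP} := \scE^h_T\cdot\bbP$ under which $\tilde W_t := W_t-\int_0^t h_s\,ds$ is a Brownian motion and $Z$ retains its BMO property. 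Substituting $dW_s = d\tilde W_s + h_s\,ds$ into the It\^o identity, the Girsanov drift $-2 Y_s^T Z_s h_s\,ds$ cancels precisely the upper bound $2 Y_s^T Z_s h_s\,ds$ on the $G$-term, leaving
\begin{equation*}
|Y_t|^2 \le |\xi|^2 + \int_t^T \brak{2 Y_s^T F(s,Y_s,Z_s) - |Z_s|^2}\,ds - 2\int_t^T Y_s^T Z_s\,d\tilde W_s.
\end{equation*}

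For the remaining $F$-contribution, two successive applications of Young's inequality ($2C|y||z|\le C^2|y|^2+|z|^2$ and $2C|y|\le C^2+|y|^2$) on $y^T F \le C|y|(1+|y|+|z|)$ yield the pointwise estimate $2y^T F(s,y,z) - |z|^2 \le C^2 + (C+1)^2|y|^2$. Since $Y$ is bounded and $Z \in \bbH^{\rm BMO}$ under $\tilde{\bbP}$, the $\tilde W$-stochastic integral is a true $\tilde{\bbP}$-martingale, so taking $\tilde{\bbE}_t$ and then passing to $a(t) := \esssup_\omega |Y_t(\omega)|^2$ produces the scalar integral inequality
\begin{equation*}
a(t) \le C^2(1+T-t) + (C+1)^2 \int_t^T a(s)\,ds,
\end{equation*}
from which a backward Gronwall estimate yields $a(t) \le (C+1)^2 \exp\brak{(C+1)^2(T-t)}$; taking square roots gives the claim. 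The main obstacle is engineering $h$ in precisely the right form so that its Girsanov drift absorbs the $G$-contribution cleanly, after which everything reduces to Young's inequalities and a scalar Gronwall calculation.
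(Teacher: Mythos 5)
Your proof is correct and takes essentially the same route as the paper: the identical Girsanov drift $\rho(|Y_s|)\,|Z_s|\,(Y_s^T Z_s)^T/|Y_s^T Z_s|$ is used to absorb the $G$-contribution allowed by (B4), followed by the same Young-inequality treatment of the $F$-term. The only cosmetic difference is the conclusion: the paper applies It\^o's formula to $e^{at}|Y_t|^2$ with $a=(C+1)^2$ so that the $|Y|^2$-term cancels and conditional expectation finishes the job, whereas you run a backward Gronwall argument on $\esssup_\omega|Y_t|^2$; both give the same constant.
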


\begin{proof}
Fix $a\in\bbR_+$ and note that for all $i = 1,\dots, d$, one obtains from It\^{o}'s formula,
\begin{align*}
& e^{at} |Y_t|^2 = e^{aT} |\xi|^2 + 2 \int_t^Te^{as} \brak{Y_s^T f(s,Y_s,Z_s) - \frac{a}{2}
|Y_s|^2 - \frac{1}{2} |Z_s|^2}ds - 2\int_t^Te^{as}Y^T_sZ_sdW_s\\
&= e^{aT} |\xi|^2 + 2 \int_t^Te^{as} \brak{Y^T_sf(s,Y_s,Z_s)
- \frac{a}{2} |Y_s|^2 - \frac{1}{2} |Z_s|^2 - Y^T_s Z_s \brak{\rho(|Y_s|)|Z_s| \frac{Y^T_s Z_s}{|Y^T_s Z_s|}}^T}ds\\
& \quad - 2 \int_t^Te^{as}Y_s^T Z_sd\tilde W_s,
\end{align*}
where 
\[
\tilde W_s=W_s-\int_0^t\brak{\rho(|Y_s|)|Z_s|\frac{Y_s^T Z_s}{|Y_s^T Z_s|}}^Tds
\]
is a Brownian motion under the equivalent probability measure 
$$
\tilde{\bbP} := \exp \brak{\int_0^T \rho(|Y_s|)|Z_s|\frac{Y_s^T Z_s}{|Y_s^T Z_s|} dW_s
- \frac{1}{2} \int_0^T \brak{\rho(|Y_s|)|Z_s|}^2 ds} \cdot \bbP.
$$
Denote by $\tilde\bbE$ the expectation with respect to $\tilde\bbP$ and notice that
\begin{align*}
&Y_s^T f(s,Y_s,Z_s) - \frac{a}{2}
|Y_s|^2 - \frac{1}{2}|Z_s|^2 -Y_s^TZ_s\brak{\rho(|Y_s|)|Z_s|\frac{Y_s^T Z_s}{|Y_s^T Z_s|}}^T\\
&= Y_s^T F(s,Y_s,Z_s) + Y_s^T G(s,Y_s,Z_s) - \frac{a}{2}
|Y_s|^2 - \frac{1}{2}|Z_s|^2 - |Y_s^TZ_s| \rho(|Y_s|)|Z_s|\\
& \leq C |Y_s| (1 +|Y_s|+ |Z_s|) - \frac{a}{2} |Y_s|^2-\frac{1}{2}|Z_s|^2\\
&\leq \frac{C^2}{2} + \frac{1}{2} \brak{(C+1)^2 - a}|Y_s|^2.
\end{align*}
So for $a= (C+1)^2$, one obtains
\begin{align*}
e^{at}|Y_t|^2
&\leq
C^2e^{aT}+C^2 \int_t^Te^{as}ds - 2\int_t^Te^{as}Y_s^TZ_sd\tilde W_s,
\end{align*}
which by applying $\tilde{\bbE}_t$, gives
$|Y_t|^2 \le C^2 e^{a(T-t)} (a+1)/a$, and therefore,
\[
|Y_t| \le C \exp \brak{\frac{(C+1)^2}{2} (T-t)} \sqrt{\frac{C^2+1}{C^2}}
\le (C+1) \exp \brak{\frac{(C+1)^2}{2} (T-t)}.
\]
\end{proof}

\medskip 
\noindent
{\bf Proof of Theorem \ref{thmsub}}\\
Choose $h > 0$ such that $T/h \in\bbN$ and the mapping $\phi$ from the proof of Lemma
\ref{lemma:local} is a contraction when $C$ is replaced by $(C+1) \exp \brak{(C+1)^2T/2}$.
Then \eqref{bsdesub} has a unique solution $(Y^{(1)},Z^{(1)}) \in \bbS^{\infty}_{[T- h,T]}(\mathbb{R}^d) \times
\bbH^{\rm BMO}_{[T- h,T]}(\mathbb{R}^{d \times n})$ on $[T-h ,T]$. By
Lemma \ref{lemma:bound}, one has $|Y^{(1)}_t| \le \varphi(t) := (C+1) \exp \brak{(C+1)^2 (T-t)/2}$.
Another application of Lemma \ref{lemma:local} yields that on $[T-2h,T- h]$,
the BSDE \eqref{bsdesub} with terminal condition $Y^{(1)}_{T- h}$ has 
a unique solution $(Y^{(2)},Z^{(2)}) \in \bbS^{\infty}_{[T- 2h,T-h]}(\mathbb{R}^d) \times
\bbH^{\rm BMO}_{[T- 2 h,T-h]}(\mathbb{R}^{d \times n})$.
Pasting together the two solutions gives a solution 
$(Y,Z) \in \bbS^{\infty}_{[T- 2 h,T]}(\mathbb{R}^d) \times
\bbH^{\rm BMO}_{[T- 2 h,T]}(\mathbb{R}^{d \times n})$ on $[T-2 h,T]$. By Lemma
\ref{lemma:bound}, it satisfies $|Y_t| \le \varphi(t)$. Continuing like this yields a unique solution 
$(Y,Z) \in \bbS^{\infty}(\mathbb{R}^d) \times
\bbH^{\rm BMO}(\mathbb{R}^{d \times n})$ on $[0,T]$, which by 
Lemma \ref{lemma:bound}, satisfies $|Y_t| \le \varphi(t)$.
\qed
\section*{Acknowledgements}
We thank Peng Luo and two anonymous referees for helpful comments.

\end{document}